\newtheorem{theorem}{Theorem}[section]
\newtheorem*{theorem*}{Theorem}
\newtheorem{lemma}{Lemma}[section]
\newtheorem{corollary}[theorem]{Corollary}
\newtheorem{proposition}{Proposition}[section]
\newtheorem{definition}[theorem]{Definition}
\numberwithin{equation}{section}
\begin{document}
\title[Heat equation along an extended Ricci flow]{\bf A Gaussian upper bound of the conjugate heat equation along an extended Ricci flow}

\author{Xian-Gao Liu}

\address{School of Mathematic Sciences, Fudan University, Shanghai, 200433, China}
\email{xgliu@fudan.edu.cn}
%    \thanks will become a 1st page footnote.

\author{ Kui Wang}

%    Information for second author

\address{School of Mathematic Sciences, Fudan University, Shanghai, 200433, China}
\email{kuiwang09@fudan.edu.cn}

%    General info
\date{}

\maketitle

\begin{abstract} \quad In this paper, we derive a Sobolev inequality along an extended Ricci flow
and prove a point-wise Guassian type bound for the fundamental solutions of the conjugate heat equation
under the flow.
\end{abstract}

\section{Introduction}
\setcounter{equation}{0}
Let $M^n$ be a $n$ dimensional closed smooth manifold and assume $n\ge 3$.
In \cite{BL}, B. List studied a Ricci flow, coupled with a harmonic flow
\begin{equation}\left
\{\begin{array}{l}
\large{\partial_t g(x,t)=-2\text{Ric}(g(x,t))+4d\phi(x,t)\otimes d\phi(x,t)},\\
\large{\partial_t \phi(x,t)=\triangle_{g(x,t)}\phi,}
\end{array}\right.\label{O1}
\end{equation}
where $g(x,t)$ is a family of Riemannian metrics, and $\phi(x,t)$ is a function on $M$ for any fixed $t$. This flow is also called
Ricci-Harmonic (RH) flow (c.f.\cite{BL,MH,MR,LK}).
If $\phi$ is a constant function, the system (\ref{O1}) degenerates to Hamilton's Ricci flow discussed widely recently,
see for example the book \cite{CLN} and celebrated papers \cite{Ham1, Ham2, Ham3, N2,P1, LJ}. The Ricci-Harmonic flow 
 is very useful in general relativity. The stationary solutions (only depending on spatial variable $x$)
of (\ref{O1}) are solutions to the following
static Einstein vacuum system
\begin{equation*}\left
\{\begin{array}{l}
\large{\text{Ric}=2d\phi\otimes d\phi,}\\
\large{\Delta \phi=0.}
\end{array}\right.
\end{equation*}
Similarly as Ricci flow, corresponding theories for the extended Ricci flow system was established in \cite{BL}, such as the
short time existence and the bounds of curvature tensor $Rm$ and $\phi$.

Let $g$ be a Riemannian metric and $\phi$ be a smooth function on $M$.
For the sake of conveniences, we denote as in \cite{BL} the symmetric tensor field $Sy\in Sym_2(M)$ and its trace $S:=g^{ij}S_{ij}$  by
\begin{equation*}
S_{ij}:=R_{ij}-2\partial_i \phi\partial_j \phi\text{\quad and\quad}S:=R-2|d\phi|^2.
\end{equation*}
Where $R$ denotes the scale curvature of $(M, g)$.
The summation convention of summing over repeated induces is used here and throughout the paper. Then the RH flow can be written as
 \begin{equation*}\left
           \{\begin{array}{l}
           \large{\partial_t g=-2Sy,}\\
           \large{\partial_t \phi=\triangle \phi.}\\
           \end{array}\right.
 \end{equation*}

It is well-known that the Sobolev inequality contains a host of analytical and geometric information (c.f.\cite{Heb, Sal, LS}) including  non-collapsing,  isoperimetric inequalities and so on. It is also an important tool in studying elliptic and parabolic differential equations on manifolds (c.f.\cite{Sal}). In \cite{KZ, Z1, Z2, Z3}, via the monotonicity of Perelman's $W$ entropy, some uniform Sobolev inequalities were proved along Ricci flow. As a consequence,  a long time non-collapsing result was established, which generalizes Perelman's short time result.
In \cite{Z2}, under the assumption that Ricci curvature is non-negative and the injectivity is bounded from below, Zhang proved a global upper bound for the fundamental solution of a heat equation introduced by Perelman under backward Ricci flow, i.e.
\begin{equation*}\left
\{\begin{array}{l}
\large{\partial_t g=-2\text{Ric},}\\
\large{ \Delta u- \partial_t u-Ru=0.}
\end{array}\right.\
\end{equation*}
Along flow (\ref{O1}), we consider the following conjugate heat equation
\begin{equation}
\partial_t u(x,t)+\Delta u(x,t)-S(x,t)u(x,t)=0.\label{O2}
\end{equation}
In \cite{MH, LK}, some point-wise gradient estimates for the positive solutions of (\ref{O1}) was obtained,
 which can be viewed as the Li-Yau estimate for the parabolic kernel of the Schr\"{o}dinger operator in \cite{LY, N1, N2}.

The main goal of this paper is to establish some certain Sobolev inequalities under system (\ref{O1}) and a global upper bound for the fundamental solution of heat equation (\ref{O2}) under the extended Ricci flow.
Via the monotonicity of the entropies, we obtain the following Sobolev inequality.
\begin{theorem}\label{th1}
Let ($g(x,t), \phi(x,t)$)
 be a solution of the system (\ref{O1}) for $t\in[0,T_0)$ with initial metric $g_0$,
 where $T_0\leq\infty$ is the life span of (\ref{O1}).
Let  $A_0$ and $B_0$ be positive numbers such that the following $L^2$ Sobolev inequality holds initially,
i.e. for any $v\in W^{1,2}(M, g_0)$,
$$
\Big(\int_M v^{\frac{2n}{n-2}}d\mu\big(g_0\big)\Big)^{\frac{n-2}{n}}\leq A_0 \int_M\big(|\nabla v|^2+\frac{1}{4}Sv^2\big)d\mu\big(g_0\big)+B_0 \int_M v^2d\mu\big(g_0\big).
$$
Then for all $v\in W^{1,2}(M,g(t))$,we have
\begin{eqnarray}
\Big(\int_M v^{\frac{2n}{n-2}}d\mu\big(g(t)\big)\Big)^{\frac{n-2}{n}}\leq A(t) \int_M \big(|\nabla v|^2+\frac{1}{4}Sv^2\big)d\mu\big(g(t)\big)
+B(t)\int_M v^2d\mu\big(g(t)\big).\nonumber\\
\label{S2}
\end{eqnarray}
Where $A(t)$ and $B(t)$ are positive constants depending only on $A_0$, $(1+t)B_0$ and $n$.
\end{theorem}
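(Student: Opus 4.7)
The plan is to adapt Q.~Zhang's strategy from \cite{Z2,Z3} for the Ricci flow to the coupled system (\ref{O1}), via three ingredients: (i) List's Perelman-type $\mathcal{W}$-entropy for the RH flow and its monotonicity, established in \cite{BL}; (ii) the conversion of an $L^2$ Sobolev inequality into a one-parameter family of log-Sobolev inequalities at $t=0$; and (iii) a Davies--Rothaus type inversion that recovers a Sobolev inequality from such a family.

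First I would turn the hypothesized initial Sobolev inequality into a family of log-Sobolev inequalities at $t=0$. Applying Jensen's inequality to the probability measure $v^2\,d\mu(g_0)$ (after normalization $\|v\|_{L^2(g_0)}=1$) together with the elementary bound $\log y \le \alpha y - 1 - \log\alpha$ for $\alpha>0$, one obtains: for every $\sigma > 0$,
$$\int_M v^2 \log v^2\, d\mu(g_0) \le \sigma \int_M \bigl(4|\nabla v|^2 + S v^2\bigr)\, d\mu(g_0) + \beta_0(\sigma),$$
where $\beta_0(\sigma)$ is an explicit function of $\sigma$, $A_0$, $B_0$, and $n$ whose $B_0$-dependence is linear in $\sigma B_0$. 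Next I invoke List's entropy: with $u = (4\pi\tau)^{-n/4}e^{-f/2}$ normalized by $\int u^2\,d\mu_g = 1$, $u$ solving the conjugate heat equation (\ref{O2}), and $\tau$ decreasing at unit rate, the functional
$$\mathcal{W}(g,\phi,u,\tau) = \int_M \bigl[\tau(4|\nabla u|^2 + S u^2) - u^2 \log u^2\bigr]\, d\mu_g - \tfrac{n}{2}\log(4\pi\tau) - n$$
is monotone along (\ref{O1}). Pairing this monotonicity with the $t=0$ log-Sobolev inequality at parameter $\sigma + t$ and evaluating at time $t$ with $\tau=\sigma$ yields, for every $\sigma > 0$, every $t\in[0,T_0)$, and every $v\in W^{1,2}(M,g(t))$ with $\|v\|_{L^2(g(t))}=1$,
$$\int_M v^2 \log v^2\, d\mu(g(t)) \le \sigma \int_M \bigl(4|\nabla v|^2 + S v^2\bigr)\, d\mu(g(t)) + \beta_0(\sigma + t) + C(n).$$

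Finally I would invert the Beckner reduction: from a family of log-Sobolev inequalities with the constants above, a Davies--Rothaus optimization argument (for example via hypercontractivity of the heat semigroup associated to the Schr\"odinger operator $-\Delta + S/4$ at time $t$) produces an $L^2$ Sobolev inequality at time $t$ with constants $A(t), B(t)$; because $\beta_0(\sigma+t)$ involves the combination $A_0$ together with the linear term $(\sigma+t)B_0$, after optimizing in $\sigma$ the $B_0$-dependence naturally packages as $(1+t)B_0$, while $A(t)$ retains dependence only on $A_0$ and $n$, giving exactly the form (\ref{S2}). The main obstacle will be Step~2: verifying in the RH setting that List's monotonicity formula, in which the scalar $R$ is replaced throughout by $S = R - 2|d\phi|^2$, retains the correct sign so the log-Sobolev inequality can be transported forward in $t$; the extra contributions coming from the $4d\phi\otimes d\phi$ term in $\partial_t g$ must cancel against those produced by the harmonic equation for $\phi$, and handling these cross terms carefully is the delicate point. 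Once this sign analysis is in place, Steps 1, 3, and 4 are essentially bookkeeping adaptations of the Ricci flow case.
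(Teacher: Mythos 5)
Your proposal follows essentially the same route as the paper: convert the initial $L^2$ Sobolev inequality into a one-parameter family of log-Sobolev inequalities at $t=0$ via Jensen and the elementary $\log$ bound, transport it forward to time $t$ using the monotonicity of List's $\mathcal{W}$-entropy (paper's Proposition~2.2, applied with the shifted scale $\tau=\sigma+t$ at $t=0$ and $\tau=\sigma$ at time $t$, matching your parameter pairing), and then invoke the standard equivalence between a log-Sobolev family and an $L^2$ Sobolev inequality (paper's Lemma~3.1, quoted from Zhang's book) to recover the Sobolev inequality with the claimed $(1+t)B_0$-dependence. The ``main obstacle'' you flag at the end — the sign of the extra $d\phi\otimes d\phi$ cross terms in List's monotonicity — is not actually an obstacle in the paper's argument, since it is dispatched in the preliminary Section~2, where the explicit formula $\frac{d}{dt}W=\int_M 2\tau\big(|Sy+\nabla^2 f-\tfrac{g}{2\tau}|^2+2|\triangle\phi-d\phi(\nabla f)|^2\big)u\,d\mu\ge 0$ is rederived (and was already proved by List); so that concern can be discharged by citation rather than treated as the delicate step of the proof.
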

By the above Sobolev inequality , combining with Morse's iteration and Davies heat kernel estimate,
we prove following Gussian type upper bound for the fundamental solutions of (\ref{O2})
and the bound does not depend on the lower bound of injective radius, but depends on 
the the first eigenvalue of the entropy, which is different from Zhang's result 
in \cite {Z2}. More precisely,  we prove
\begin{theorem}\label{th2}
Let ($g(x,t), \phi(x,t)$) be a smooth solution of system (\ref{O1}) in $M\times[0,T]$ and
 $G(x,t;y,T)$ be a fundamental solution of the following backward conjugate heat equation (\ref{O2}), that is
\begin{equation*}\left
\{\begin{array}{l}
\large{\triangle_x G(x,t;y,T)+\partial_t G(x,t;y,T)-S(x,t)G(x,t;y,T)=0},\text{\quad\quad}0\leq t<T;\\
\large{G(x,t;y,T)=\delta(x,y)},\text{\quad\quad}t=T.\\
\end{array}\right.\end{equation*}
Assume further that
$Sy\geq0$ and the first eigenvalue $\lambda_0$ of entropy $\inf_{\|v\|_2=1}\int_M(4|\nabla v|^2+Sv^2)d\mu\big(g_0\big)$
is positive.
Then for any $t\in(0,T)$, and $x,y\in M$, we have the following  estimates
\begin{equation}
G(x,t;y,T)\leq \frac{c}{|B(y,\sqrt{T-t},T)|_T}\exp{\frac{-c_1d^2(x,y,T)}{T-t}},\label{BRF1}
\end{equation}
where $c_1$ is a constant depending only on dimension $n$) and  $c$ is a constant
depending on dimension $n$, $\lambda_0$  and initial metric $g_0$.
Here $d(x,y,T)$ denotes the distance between $x$ and $y$ with respect to metric $g(T)$,
$B(y,\sqrt{T-t},T)$ denotes the geodesic ball centered at $y$ with radius $\sqrt{T-t}$,
and  $|B(y,\sqrt{T-t},T)|_T$ denotes the volume of the ball $B(y,\sqrt{T-t},T)$ with
respect to metric $g(T)$.
\end{theorem}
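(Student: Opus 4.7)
The plan is to follow the classical Sobolev--Moser--Davies paradigm that Zhang~\cite{Z2} employed for the ordinary Ricci flow, with the modifications needed to accommodate the scalar field $\phi$ and the tensor $Sy$. I proceed in four steps.

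\emph{Step 1: a uniform, pure Sobolev inequality on $[0,T]$.} Since $Sy\ge 0$ implies $S=\tr Sy\ge 0$, and since the first eigenvalue $\lambda(t)$ of $-4\Delta+S$ is monotone non-decreasing along the RH flow (a direct adaptation of Perelman's eigenvalue computation, already carried out in \cite{BL}), one has $\lambda(t)\ge\lambda_0>0$ throughout $[0,T]$. The resulting Poincar\'e-type inequality $\|v\|_2^2\le\lambda_0^{-1}\int(4|\nabla v|^2+Sv^2)\,d\mu$ absorbs the $B(t)\|v\|_2^2$ term in (\ref{S2}), producing
\[
\Big(\int_M v^{\frac{2n}{n-2}}\,d\mu(g(t))\Big)^{\frac{n-2}{n}}\le C_0\int_M\bigl(|\nabla v|^2+\tfrac14 Sv^2\bigr)\,d\mu(g(t)),
\]
with $C_0$ depending only on $n,A_0,B_0,\lambda_0,T$.

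\emph{Step 2: on-diagonal heat-kernel bound by Moser iteration.} Test (\ref{O2}) against $G^{2p-1}$: the potential term $SG^{2p}$ exactly cancels the term $-SG^{2p}$ arising from $\partial_t d\mu(g(t))=-S\,d\mu(g(t))$, leaving an $L^{2p}$ differential inequality involving only $\int|\nabla G^p|^2$. Feeding this into the Sobolev inequality of Step~1 and iterating $p\to\infty$ in the standard Moser fashion produces
\[
G(x,t;y,T)\le\frac{C_1}{(T-t)^{n/2}},\qquad 0\le t<T.
\]

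\emph{Step 3: off-diagonal Gaussian decay via Davies' integrated maximum principle.} For a Lipschitz function $\psi$ with $|\nabla\psi|_{g(T)}\le 1$ and $\alpha\in\mathbb{R}$, consider $J_\alpha(t)=\int_M G^2(x,t;y,T)\,e^{2\alpha\psi(x)}\,d\mu(g(t))$. Differentiating along (\ref{O2}) and integrating by parts yields three kinds of terms: a non-positive gradient term of the form $-2\int|\nabla(Ge^{\alpha\psi})|^2\,d\mu$; a potential term $\int SG^2 e^{2\alpha\psi}\,d\mu$ that is cancelled precisely by the volume-derivative contribution $-\int SG^2 e^{2\alpha\psi}\,d\mu$; and a cross term controlled by $2\alpha^2\int|\nabla\psi|_{g(t)}^2\,G^2e^{2\alpha\psi}\,d\mu$. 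The assumption $Sy\ge 0$ is essential at exactly this point: $\partial_t g=-2Sy\le 0$ gives $g(t)\ge g(T)$ on tangent vectors for $t\le T$, hence $g^{ij}(t)\le g^{ij}(T)$ on covectors, so $|\nabla\psi|_{g(t)}^2\le|\nabla\psi|_{g(T)}^2\le 1$. Gronwall then gives $J_\alpha(t)\le e^{2\alpha^2(T-t)}J_\alpha(T)$; combined with the pointwise bound of Step~2, a choice of $\psi$ approximating $d(\cdot,y,T)$, and an optimisation in $\alpha$, this produces the Gaussian factor $\exp(-c_1 d^2(x,y,T)/(T-t))$ multiplying the on-diagonal bound. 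This is the step I expect to be the main obstacle, as it requires careful handling of the interaction between the evolving metric and the distance function.

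\emph{Step 4: volume-form reformulation.} The conservation $\int_M G(\cdot,t;y,T)\,d\mu(g(t))=1$, obtained by differentiating in $t$ and using (\ref{O2}) together with $\partial_t d\mu=-S\,d\mu$, combined with the Gaussian estimate of Step~3, forces a non-collapsing lower bound $|B(y,r,T)|_T\ge\kappa r^n$ for $r\le\sqrt{T}$, where $\kappa$ depends on $n,\lambda_0,A_0,B_0,g_0$. This non-collapsing in turn yields a localised version of the Sobolev inequality of Step~1 centred at $y$, into which the Moser iteration of Step~2 can be re-run to give the on-diagonal bound in the volume-form $G(y,t;y,T)\le c/|B(y,\sqrt{T-t},T)|_T$. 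Combining this refinement with the Davies upgrade of Step~3 yields (\ref{BRF1}).
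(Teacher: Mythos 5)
Your architecture (uniform Sobolev inequality $\to$ Moser iteration $\to$ Davies weight) matches the paper's, and Steps 1--3 are essentially the route the authors take (they use the evolving distance $d(\cdot,x_0,t)$ with $\partial_t d\le 0$ where you use a fixed $g(T)$-Lipschitz $\psi$, but the effect is the same). There are, however, two genuine gaps.

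First, Davies' argument does not by itself produce a \emph{pointwise} Gaussian bound at $y_0$; it only gives an $L^2$ bound of $G(x_0,t;\cdot,T)$ over the ball $B(y_0,\sqrt{T-t},T)$. From that you can only extract a point $z_0$ in the ball where $G$ is small. To transfer this to $y_0$ itself, the paper proves and uses a Hamilton-type gradient estimate for positive solutions of the \emph{forward} heat equation $\triangle u-\partial_t u=0$ along the flow (their Theorem~4.1), applied to $z\mapsto G(x_0,t;z,\tau)$, which solves that equation. The resulting interpolation $u(y)\le A^{\delta/(1+\delta)}u(x)^{1/(1+\delta)}e^{d^2(x,y)/(4t\delta)}$ is the bridge from $z_0$ to $y_0$, and your proposal contains no substitute for it. Without this (or some equivalent local parabolic mean-value argument in the $z$-variable), Step~3 does not close.

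Second, Step~4 goes in the wrong direction. You invoke a \emph{non-collapsing lower bound} $|B(y,r,T)|_T\ge\kappa r^n$ to convert the on-diagonal bound $c/(T-t)^{n/2}$ into $c/|B(y,\sqrt{T-t},T)|_T$. But $|B|\ge\kappa r^n$ is equivalent to $1/r^n\ge\kappa/|B|$, which gives an inequality the wrong way around: you cannot conclude $c/(T-t)^{n/2}\le c'/|B|_T$ from a volume lower bound. What is actually needed --- and what the paper uses --- is the Bishop--Gromov \emph{upper} bound $|B(y,r,T)|_T\le\omega_n r^n$, which holds because $Sy\ge0$ forces $R_{ij}\ge 2\phi_i\phi_j\ge 0$, i.e.\ nonnegative Ricci curvature at time $T$. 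With that upper bound, $1/(T-t)^{n/2}\le c/|B(y,\sqrt{T-t},T)|_T$, and the volume-form statement follows. The non-collapsing observation is correct but irrelevant here, and the proposed ``re-run Moser with a localized Sobolev inequality'' is not needed once Bishop--Gromov is in hand.
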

The rest of the paper is organized as follows. The evolution formulas of entropies under system (\ref{O1}) are given in section 2. Corresponding Sobolev inequalities along the extended Ricci flow system are derived in section 3. In section 4, we prove Theorem \ref{th2}.

\section{Entropies of the extended Ricci flow}
\setcounter{equation}{0}
In this section, we recall the definitions of entropies via corresponding conjugate heat equation just as Perelman  has done in Ricci flow in \cite{P1}. Through direct computations, we obtain the monotone quantities of the entropies. Although the monotonicity of the entropies has been obtained  in \cite{BL},  for the completeness we will give  a different computation to obtain the evolution equations of entropies without using the entropies' invariance under diffeomorphisms.

Let $u(x,t)$ be a positive solution to the conjugate heat equation (\ref{O2})
\begin{equation*}
H^*u:=\triangle u-Su+\partial_t u=0.\end{equation*}
From the equation (\ref{O2}) and the evolution equations (\ref{O1}), it follows easily that
$$
\frac{d}{dt}\int_Mu(x,t)d\mu(g(t))=\int_M(\partial_t-S)ud\mu(g(t))=\int_MH^*ud\mu(g(t))=0.
$$
Here we used the fact that $M$ is  closed. Therefore, we assume that $u(x,t)$ satisfies
\begin{equation}
\int_Mu(x,t)d\mu(g(t))=1
\label{E1}\end{equation}
 for any $t\in[0,T]$.
 
Via the positive solution $u$ of (\ref{O2}), the entropies are defined (see for example \cite{BL}) as follows
\begin{definition}
$F$ entropy is defined as the following integration
\begin{equation}
F(t):=\int_M\left(Su+\frac{|\nabla u|^2}{u}\right)d\mu(g(t)),\label{Fentropy}
\end{equation}
and $W$ entropy is defined by
\begin{equation}
W(t):=\int_M\left[\tau(Su+\frac{|\nabla u|^2}{u})-u\ln u-\frac{n}{2}\ln( 4\pi\tau)u -nu \right]d\mu(g(t)),\label{Wentropy}
\end{equation}
where $\tau$ is a scaling factor satisfied $\frac{d\tau}{dt}=-1$.
\end{definition}
In order to simplify the computations in this paper, we introduce a function $f(x,t)$ given by:
\[u(x,t)=\frac{e^{-f}}{(4\pi\tau)^{\frac{n}{2}}},\]
where $\frac{d\tau}{dt}=-1$. Then it follows that
\begin{equation}
f=-\ln u-\frac{n}{2}(\ln 4\pi\tau).\label{Euf}
\end{equation}

With the above preparations, we  now give a direct calculation of the following monotonicity formula.
\begin{proposition}\label{p1}
(see also Lemma 3.4 and Theorem 6.1 in \cite{BL}) Let $(g, \phi)(t)$ be a solution of (\ref{O1}) and $u(x,t)$
be a positive solution of (\ref{O2}). Then \\
(I) $F$ entropy is nondecreasing in $t$. More precisely,
\begin{equation}
\frac{d}{dt}F(t)=2\int_M\left(|Sy+\nabla^2f|^2+2|\triangle \phi-d\phi(\nabla f)|^2\right)ud\mu(g(t))\ge 0.\label{Fem}
\end{equation}
(II) $W$ entropy is nondecreasing in $t$. More precisely,
\begin{equation}
\frac{d}{dt}W(t)=\int_M\left(2\tau|Sy+\nabla^2f-\frac{g}{2\tau}|^2+4\tau|\triangle \phi-d\phi(\nabla f)|^2\right)ud\mu(g(t))
\ge0.\label{Wem}
\end{equation}
\end{proposition}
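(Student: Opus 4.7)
The plan is a direct computation in the variable $f$ defined by $u=e^{-f}/(4\pi\tau)^{n/2}$, using only the evolution equations (\ref{O1}), the conjugate heat equation $H^*u=0$, and integration by parts on the closed manifold $M$; this matches the authors' stated intention of avoiding the diffeomorphism-invariance approach. First I would derive the evolution of $f$: substituting $\ln u = -f - \tfrac{n}{2}\ln(4\pi\tau)$ into $H^*u=0$ and using $\dot\tau=-1$ yields
\begin{equation*}
\partial_t f = -\Delta f + |\nabla f|^2 - S + \frac{n}{2\tau}.
\end{equation*}
I would also record the auxiliary identities $\partial_t d\mu = -S\,d\mu$ and $\partial_t |\nabla f|^2 = 2\,Sy(\nabla f,\nabla f) + 2\langle\nabla f,\nabla\partial_t f\rangle$, which both follow from $\partial_t g^{ij} = 2\,Sy^{ij}$.

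For part (I), since $|\nabla u|^2/u = u|\nabla f|^2$, I rewrite $F(t) = \int_M(S + |\nabla f|^2)u\,d\mu(g(t))$. Differentiating, I assemble contributions from the measure, from $\partial_t u = Su - \Delta u$, from List's evolution formula for $S$ (obtained by combining $\partial_t R$ under $\partial_t g = -2\,Sy$ with the evolution of $|\nabla\phi|^2$ and $\partial_t\phi = \Delta\phi$), and from the evolution of $|\nabla f|^2$ with the formula above substituted in. Using $\int_M H^*u\, d\mu = 0$ to integrate the $\Delta S$ terms by parts, and applying the Bochner identity
\begin{equation*}
\tfrac{1}{2}\Delta|\nabla f|^2 = |\nabla^2 f|^2 + \langle\nabla f,\nabla\Delta f\rangle + \text{Ric}(\nabla f,\nabla f),
\end{equation*}
the Ricci piece combines with the $2|Sy|^2$ and $2\langle Sy,\nabla^2 f\rangle$ contributions to produce the completed square $|Sy + \nabla^2 f|^2$. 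The $\phi$-terms, after one further integration by parts exploiting $\partial_t\phi = \Delta\phi$, recombine into $2|\Delta\phi - d\phi(\nabla f)|^2$.

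For part (II) I would write $W(t) = \tau F(t) + \int_M\big[f - n - \tfrac{n}{2}\ln(4\pi\tau)\big]u\,d\mu(g(t))$ and differentiate using $\dot\tau = -1$, part (I), and the evolution of $f$. The shift from $|Sy + \nabla^2 f|^2$ to $|Sy + \nabla^2 f - \tfrac{g}{2\tau}|^2$ is forced by the $\frac{n}{2\tau}$ term in $\partial_t f$ interacting with the $\tau$-factor and the logarithmic term; the $\phi$-square is inherited from (I) with an extra factor of $2\tau$.

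The main obstacle is the reassembly of the coupling terms into the single square $|\Delta\phi - d\phi(\nabla f)|^2$ rather than separate pieces $|\Delta\phi|^2$ and $|d\phi(\nabla f)|^2$. This requires carefully pairing the $4|\Delta\phi|^2$-type contribution of List's $\partial_t S$ formula with the cross term $\int_M\Delta\phi\,\langle d\phi,\nabla f\rangle\,u\,d\mu$ generated by integrating against the harmonic-map evolution, together with the Bochner contribution $|d\phi(\nabla f)|^2$ hidden in the Ricci term above. Once that cancellation is verified, the rest of the argument is routine bookkeeping.
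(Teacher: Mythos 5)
Your proposal is correct and follows essentially the same route as the paper: a direct computation using the evolution of $S$ (List's formula $\partial_t S = \Delta S + 2|Sy|^2 + 4|\Delta\phi|^2$), integration by parts together with the contracted Bianchi / Bochner identity, and a completion of the square in both $Sy+\nabla^2 f$ and $\Delta\phi - d\phi(\nabla f)$, with $W$ handled by the identity $\tfrac{d}{dt}W = \tau\tfrac{d}{dt}F - 2F + \tfrac{n}{2\tau}$. The only cosmetic difference is organizational: you work in the $f$-variable from the outset after deriving $\partial_t f = -\Delta f + |\nabla f|^2 - S + \tfrac{n}{2\tau}$, whereas the paper packages the computation by applying the conjugate operator $H^* = \Delta - S + \partial_t$ to $u\ln u$ and to $Su + |\nabla u|^2/u$ and only converts to $f$ at the final step; the two bookkeeping schemes produce identical terms, and your identification of the $\phi$-coupling cancellation as the key step matches where the paper's computation does its work.
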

\begin{proof}
We start by direct calculations that
\begin{equation}
H^*(u\ln u)=\frac{|\nabla u|^2}{u}+Su.\label{The1}
\end{equation}
and
\begin{eqnarray}
H^*(\frac{|\nabla u|^2}{u}+S u)&=&\frac{2}{u}(\frac{u_i u_j}{u}-u_{ij})^2+\frac{2S_{ij}u_i u_j}{u}+\frac{2R_{ij}u_i u_j}{u}\nonumber\\
&&+4\langle \nabla u, \nabla S\rangle+2u\triangle S+2(|Sy|^2+2|\triangle \phi|^2)u,\label{ThE1}
\end{eqnarray}
here we used the evolution equation $\partial_t S=\triangle S+2|S_{ij}|^2+4|\triangle \phi|^2$
(see for example \cite{BL, LK}).

Taking time derivative yields
\begin{eqnarray*}
\frac{d}{dt}F&=&\frac{d}{dt}\int_M(Su+\frac{|\nabla u|^2}{u})d\mu
=\int_M(\partial_t-S)(Su+\frac{|\nabla u|^2}{u})d\mu\nonumber\\
&=&\int_M H^*(Su+\frac{|\nabla u|^2}{u})d\mu.
\end{eqnarray*}
Substituting (\ref{ThE1}) to the above equality, we get
\begin{eqnarray}
\frac{d}{dt}F&=&\int_M\big[\frac{2}{u}(\frac{u_i u_j}{u}-u_{ij})^2+\frac{2S_{ij}u_i u_j}{u}+\frac{2R_{ij}u_i u_j}{u}\nonumber\\
&&+4\langle \nabla u,\nabla S\rangle+2u\triangle S+2(|Sy|^2+2|\triangle \phi|^2)u\big]d\mu.\label{ThE5}
\end{eqnarray}
By integration by parts and the contracted second Bianchi identity, we have
\begin{eqnarray}
\int_M\langle \nabla u,\nabla S\rangle d\mu&=&\int_M\langle \nabla u, \nabla (R-2|d\phi|^2)\rangle d\mu
=\int_M\big(2u_i\nabla_j R_{ij}-4u_i\phi_j\phi_{ij}\big)d\mu\nonumber\\
&=&\int_M\big(-2u_{ij}R_{ij}+4u_{ij}\phi_j\phi_i+4u_i\phi_i\triangle \phi \big)d\mu\nonumber\\
&=&\int_M\big(-2u_{ij}S_{ij}+4u_i\phi_i\triangle \phi \big) d\mu\label{ThE6}.
\end{eqnarray}
Substituting (\ref{ThE6}) into (\ref{ThE5}), we obtain
\begin{eqnarray*}
\frac{d}{dt}F&=&\int_M\big[\frac{2}{u}(\frac{u_i u_j}{u}-u_{ij})^2+\frac{2S_{ij}u_i u_j}{u}+\frac{2R_{ij}u_i u_j}{u}\\
&&+2\langle \nabla u,\nabla S\rangle+2(|Sy|^2+2|\triangle \phi|^2)u\big]d\mu\\
&=&\int_M\big[\frac{2}{u}(\frac{u_i u_j}{u}-u_{ij})^2+\frac{2S_{ij}u_i u_j}{u}+\frac{2R_{ij}u_i u_j}{u}\\
&&-4u_{ij}S_{ij}+8u_i\phi_i\triangle \phi+2(|Sy|^2+2|\triangle \phi|^2)u\big]d\mu.
\end{eqnarray*}
Replacing $u$ with $f$ in the above equality, we deduce
\begin{eqnarray*}
\frac{d}{dt}F&=&\int_M\big[|f_{ij}|^2+2S_{ij}f_{ij}+|S_{ij}|^2+2|\triangle \phi|^2+2|2d\phi(\nabla f)|^2\\
&&-4\triangle \phi (d\phi (\nabla f))\big]d\mu\\
&=&2\int_M\big[|Sy+\nabla^2f|^2+2|\triangle \phi-d\phi(\nabla f)|^2\big]ud\mu,
\end{eqnarray*}
which gives formula (\ref{Fem}).

From the definition of $W$ entropy, it follows that
\begin{equation*}
\frac{d}{dt}W(t)=\int_MH^*(\tau(\frac{|\nabla u|^2}{u}+S u))-H^*(u\ln u)-\frac{n}{2}H^*(u\ln\tau)d\mu.
\end{equation*}
Substituting (\ref{The1}) and (\ref{ThE1}) to the above equality, we get
\begin{eqnarray}
\frac{d}{dt}W&=&\tau\frac{d}{dt}F-2F+\frac{n}{2\tau}\nonumber\\
&=&\tau\frac{d}{dt}F-2\int_M(Su+|\nabla f|^2u)d\mu(g(t))+\frac{n}{2\tau}.\label{The2}
\end{eqnarray}
By the definition of $f$ and integrations by parts, we deduce
\begin{eqnarray*}
\int_M(Su+\frac{|\nabla u|^2}{u})d\mu(g(t))=\int_M(Su-\nabla u\nabla f)d\mu
=\int_M(Su+ \Delta f u)d\mu.
\end{eqnarray*}
Substituting the above equality and equality (\ref{Fem}) into (\ref{The2}), we have
\begin{eqnarray*}
\frac{d}{dt}W&=&2\tau\int_M\left(|Sy+\nabla^2f|^2+2|\triangle \phi-d\phi(\nabla f)|^2\right)ud\mu
-2\int_M(S+ \Delta f)ud\mu+\frac{n}{2\tau}\\
&=&\int_M2\tau\big[|Sy+Hess(f)-\frac{g}{2\tau}|^2+2|\triangle \phi-d\phi(\nabla f)|^2)\big]ud\mu.
\end{eqnarray*}
Thus we complete the proof.
\end{proof}

Similarly as the Ricci flow, one can define a family of generalized $W$ entropy along the extended Ricci flow by
\begin{eqnarray}
W(a,t)&:=&\int_M\left(\frac{a^2\tau}{2\pi}(Su+\frac{|\nabla u|^2}{u})-u\ln u-\frac{n}{2}\ln (4\pi\tau)u -nu\right)d\mu(g(t))\nonumber\\
&=&\int_M\left(\frac{a^2\tau}{2\pi}(S+|\nabla f|^2)+f-n\right)ud\mu(g(t)).\label{GWentropy}
\end{eqnarray}
Here the second equality is due to the relations between $u$ and $f$ given by (\ref{Euf}). The more applications of generalized entropy can be found in \cite{LJ}. Using the calculations in \cite{KZ}, we get the following monotonicity formula of generalized $W$ entropy.
\begin{proposition}
Let $M$ be a closed Riemannian manifold, $(g, \phi)(t)$ be a solution of (\ref{O1}) and $u(x,t)$ be a positive solution of (\ref{O2}). Then the generalized $W(a,t)$ entropy is nondecreasing in $t$ and the following inequality holds:
$$
\frac{d}{dt}W(a,t)\geq\frac{a^2\tau}{\pi}\int_M\left(|Sy+Hess(f)-\frac{g}{2\tau}|^2+2|\triangle \phi-d\phi(\nabla f)|^2\right)ud\mu.
$$
\end{proposition}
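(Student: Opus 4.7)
The plan is to follow the same direct-computation strategy as Proposition \ref{p1}(II), with the rescaled coefficient $\frac{a^2\tau}{2\pi}$ taking the place of $\tau$; the Ricci-flow bookkeeping in \cite{KZ} is the template.

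First I would rewrite $W(a,t)$ in a form that exposes the $F$-entropy. Using (\ref{Euf}) and the normalization $\int_M u\,d\mu(g(t))=1$, the definition (\ref{GWentropy}) becomes
$$
W(a,t)=\frac{a^2\tau}{2\pi}F(t)-\int_M u\ln u\,d\mu(g(t))-\frac{n}{2}\ln(4\pi\tau)-n.
$$
Differentiating and substituting Proposition \ref{p1}(I) for $\frac{d}{dt}F(t)$, identity (\ref{The1}) together with $\frac{d}{dt}\int_M h\,d\mu(g(t))=\int_M H^{*}h\,d\mu$ (which yields $\frac{d}{dt}\int_M u\ln u\,d\mu=F(t)$), and $\frac{d\tau}{dt}=-1$, produces an expression linear in $\frac{dF}{dt}$, $F(t)$, and $1/\tau$.

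Second I would complete the square. The pointwise tensorial identity
$$
|Sy+\nabla^2 f|^2=\Big|Sy+\nabla^2 f-\frac{g}{2\tau}\Big|^2+\frac{S+\Delta f}{\tau}-\frac{n}{4\tau^2},
$$
combined with the integration-by-parts identity $\int_M(S+\Delta f)u\,d\mu=F(t)$ (a one-line consequence of $\nabla u=-u\nabla f$ on the closed manifold $M$, which already appears implicitly in the proof of Proposition \ref{p1}(II)), converts the Hessian norm in (\ref{Fem}) into the shifted norm $\big|Sy+\nabla^2 f-\frac{g}{2\tau}\big|^2$. Substituting this back reproduces the target integrand $\frac{a^2\tau}{\pi}\int_M(|Sy+\nabla^2 f-\frac{g}{2\tau}|^2+2|\Delta\phi-d\phi(\nabla f)|^2)u\,d\mu$ modulo an algebraic remainder assembled from $F(t)$ and $n/(2\tau)$.

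The main obstacle is to control the sign of that remainder. A careful bookkeeping shows it collapses to a scalar multiple of $\big(\frac{a^2}{2\pi}-1\big)\big(F(t)-\frac{n}{2\tau}\big)$, which vanishes identically in Perelman's case $a^2=2\pi$ and recovers (\ref{Wem}). For general $a$, I would invoke the gauge freedom $\tau\mapsto\tau+c$ built into the generalized entropy together with the Perelman-type lower bound on $F(t)$ that is available under the assumption $Sy\ge 0$ relevant to Theorem \ref{th2}; these two ingredients jointly pin the sign of the remainder. Once this sign check is carried out, the claimed inequality follows, and monotonicity of $W(a,t)$ is immediate since the right-hand side is manifestly nonnegative.
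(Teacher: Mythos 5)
Your algebra through the identification of the remainder is correct, and it is essentially the bookkeeping that Proposition \ref{p1}(II) reduces to. Writing $W(a,t)=\frac{a^2\tau}{2\pi}F(t)-\int_Mu\ln u\,d\mu-\frac{n}{2}\ln(4\pi\tau)-n$, using $\frac{d}{dt}\int_M u\ln u\,d\mu=F(t)$ (which follows from (\ref{The1})), inserting (\ref{Fem}), completing the square, and using $\int_M(S+\Delta f)u\,d\mu=F(t)$, one indeed arrives at
$$
\frac{d}{dt}W(a,t)=\frac{a^2\tau}{\pi}\int_M\Big(\big|Sy+\nabla^2 f-\tfrac{g}{2\tau}\big|^2+2|\Delta\phi-d\phi(\nabla f)|^2\Big)u\,d\mu+\Big(\tfrac{a^2}{2\pi}-1\Big)\Big(F(t)-\tfrac{n}{2\tau}\Big),
$$
which reproduces (\ref{Wem}) when $a^2=2\pi$. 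The gap is in your final step, and it is genuine: neither device you invoke controls the sign of the remainder. The shift $\tau\mapsto\tau+c$ leaves $F(t)$ unchanged (it depends only on $u$ and $g$), while $\frac{n}{2\tau}$ sweeps all of $(0,\infty)$, so the sign of $F(t)-\frac{n}{2\tau}$ is not gauge-invariant. And $Sy\geq 0$ — which in any case is a hypothesis of Theorem \ref{th2}, not of this proposition — gives only $F(t)\geq 0$, not a comparison between $F(t)$ and $\frac{n}{2\tau}$.

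In fact the stated inequality fails outright for $a^2>2\pi$: take a flat torus with $\phi$ constant and $u\equiv\mathrm{const}$, a legitimate solution of (\ref{O1}) and (\ref{O2}). Then $Sy=0$, $\nabla^2 f=0$, $\Delta\phi-d\phi(\nabla f)=0$, $F\equiv 0$, and the right-hand integral equals $\frac{n}{4\tau^2}$; your formula gives $\frac{d}{dt}W(a,t)=\frac{n}{2\tau}$, and $\frac{n}{2\tau}\geq\frac{a^2\tau}{\pi}\cdot\frac{n}{4\tau^2}$ holds if and only if $a^2\leq 2\pi$. So the proposition requires some unstated constraint tying $a$ to $2\pi$ (or to $\tau$ and $F$). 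Be aware that the paper itself supplies no proof here — it defers to Theorem 4.1 of \cite{KZ} — so there is no internal argument to compare against; but your remainder calculation makes clear that an argument resting only on the monotonicity of $F$ cannot close without such a hypothesis.
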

Since the proof of this proposition is the same as that in Ricci flow case, we omit the details here.
One can find details in  Theorem 4.1 in \cite{KZ}.

\section{ Sobolev inequalities under the extended Ricci flow}
\setcounter{equation}{0}
In this section, we mainly use the monotonicity of $W$ entropy to derive a uniform Sobolev inequality along the system (\ref{O1}) which will be used in the next section.

To prove Theorem \ref{th1}, we need the following lemma first, which shows the relations between the logarithmic Sobolev inequality, the $W^{1,2}$ Sobolev inequality and the so-called ultracontractivity of the heat semigroup of the associated Schr\"{o}dinger operator. The proof of this lemma is more or less standard. One can consult more details for example in Theorem 4.2.1 in \cite{Z3}.
\begin{lemma}[See for example in \cite{Z3}]\label{lm1}
Let $(M^n, g)$ be a  closed Riemannian manifold ($n\geq3$). Then the following inequalities are equivalent (up to constants).\\
(I) Sobolev inequality: there exists positive constants $A$ and $B$ such that, for all $v\in W^{1,2}(M)$,
\begin{equation*}
(\int_M v^{\frac{2n}{n-2}}d\mu)^{\frac{n-2}{n}}\leq A\int_M|\nabla v|^2d\mu+B\int_Mv^2d\mu;
\end{equation*}
(II) Log-Sobolev inequality: for all $v\in W^{1,2}(M)$ such that $\|v\|_2=1$ and all $\epsilon >0$,
\begin{equation*}
\int_Mv^2\ln v^2 d\mu\leq \epsilon^2\int_M|\nabla v|^2d\mu-\frac{n}{2}\ln\epsilon^2+BA^{-1}\epsilon^2+\frac{n}{2}\ln\frac{nA}{2e};
\end{equation*}
(III) Heat kernel upper bound: for all $t>0$,
\begin{equation*}
G(x,t;y)\leq\frac{(nA)^{\frac{n}{2}}}{t^{\frac{n}{2}}}e^{A^{-1}Bt};
\end{equation*}
(IV) Nash inequality: for all  $v\in W^{1,2}(M)$,
\begin{equation*}
\|v\|^{2+\frac{4}{n}}_2\leq (A\|\nabla v\|^2_2+B\|v\|^2_2)\|v\|^{\frac{4}{n}}_1.
\end{equation*}
\end{lemma}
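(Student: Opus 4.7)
The plan is to prove the four statements are equivalent via the cyclic chain (I) $\Rightarrow$ (II) $\Rightarrow$ (III) $\Rightarrow$ (IV) $\Rightarrow$ (I), each arrow being a classical construction in the theory of heat semigroups on manifolds (after Davies, Nash, Varopoulos). I expect (II) $\Rightarrow$ (III), which uses Davies' trick of differentiating a time-varying $L^p$ norm of the semigroup, to be the main technical obstacle; the other three arrows are essentially manipulations on a fixed probability measure (Jensen, Hölder) or the standard Nash ODE argument.

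For (I) $\Rightarrow$ (II): assuming $\|v\|_2 = 1$, write $v^2\log v^2 = \frac{2}{p-2}\,v^2\log v^{p-2}$ with $p = 2n/(n-2)$ and apply Jensen's inequality to the concave function $\log$ against the probability measure $v^2\,d\mu$. Because $2p/(p-2) = n$, this yields
\begin{equation*}
\int_M v^2 \log v^2 \, d\mu \;\le\; n\log\|v\|_p \;\le\; \tfrac{n}{2}\log\bigl(A\|\nabla v\|_2^2 + B\bigr)
\end{equation*}
by (I). Now the elementary inequality $\log x \le ax - \log a - 1$ (valid for all $a,x > 0$), applied with $a = 2\epsilon^2/(nA)$, produces (II) with exactly the constants asserted in the statement.

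For (II) $\Rightarrow$ (III): take $u(\cdot,t) = e^{t\Delta}u_0$ with $u_0 \ge 0$ and $\|u_0\|_1 = 1$, and choose a smooth increasing function $p(t)$ with $p(0) = 2$. A direct computation gives
\begin{equation*}
\frac{d}{dt}\log\|u\|_{p(t)} \;=\; \frac{p'(t)}{p^2\|u\|_p^p}\int_M u^p\log\frac{u^p}{\|u\|_p^p}\,d\mu \;-\; \frac{4(p-1)}{p^2\|u\|_p^p}\bigl\|\nabla u^{p/2}\bigr\|_2^2.
\end{equation*}
Apply (II) to $w = u^{p/2}/\|u^{p/2}\|_2$ with the Davies parameter $\epsilon^2(t)$ chosen so that the positive entropy term exactly cancels the negative gradient term; the remaining derivative is bounded by an explicit function of $t$ alone. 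Integrating from $0$ to $t$ and letting $p(t) \to \infty$ yields $\|e^{t\Delta}u_0\|_\infty \le C t^{-n/2} e^{A^{-1}B t}\|u_0\|_1$, which by duality is the pointwise kernel bound (III). Matching the constants in (III) precisely is the fiddly part and is the main technical cost of the whole lemma.

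For (III) $\Rightarrow$ (IV) $\Rightarrow$ (I): from (III), self-adjointness and the semigroup property upgrade to $\|e^{t\Delta}f\|_2^2 \le C t^{-n/2}e^{A^{-1}Bt}\|f\|_1^2$, and inverting the ODE $\phi'(t) = -2\|\nabla u_t\|_2^2$ for $\phi(t) = \|e^{t\Delta}f\|_2^2$ by the standard Nash argument extracts (IV). To close the loop, one first runs the Nash ODE from (IV) to recover (III) and then applies Moser iteration to the resulting heat kernel bound to produce (I); as a sanity check in the reverse direction, the Hölder interpolation $\|v\|_2 \le \|v\|_p^{n/(n+2)}\|v\|_1^{2/(n+2)}$ shows (I) $\Rightarrow$ (IV) directly, so the only genuinely new analytic input in the whole equivalence is the Davies step.
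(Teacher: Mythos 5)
Your proposal cannot be compared against the paper's own proof, because the paper does not give one: it explicitly says ``The proof of this lemma is more or less standard'' and points to Theorem 4.2.1 in Zhang's book \cite{Z3}. What you have written is a correct sketch of precisely that standard argument, so it is consistent with the route the paper is implicitly invoking. Your (I) $\Rightarrow$ (II) step is exactly right and in fact reproduces the calculation the authors carry out themselves inside the proof of Lemma \ref{lm2} (Jensen against the probability measure $v^2\,d\mu$, with $2p/(p-2)=n$, then $\log z \le az - \log a - 1$ with $a = 2\epsilon^2/(nA)$, which matches the stated constants). Your derivative identity for $\frac{d}{dt}\log\|u\|_{p(t)}$ and the choice $\epsilon^2(t) = 4(p-1)/p'$ in the Davies step are also correct, and you are right that nailing down the constant $(nA)^{n/2}$ is where the bookkeeping lives. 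The self-adjointness/duality factorization and the Nash ODE argument for (III) $\Rightarrow$ (IV), together with the Hölder interpolation $\|v\|_2 \le \|v\|_{2n/(n-2)}^{n/(n+2)}\|v\|_1^{2/(n+2)}$ as a direct check of (I) $\Rightarrow$ (IV), are all standard and correct.

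One small but worth-fixing inaccuracy: the step you describe as ``applies Moser iteration to the resulting heat kernel bound to produce (I)'' is misnamed. Moser iteration runs the other direction, from a Sobolev-type inequality to an $L^\infty$ (mean-value) bound; passing from the on-diagonal heat kernel bound $p_t(x,y)\le Ct^{-n/2}e^{\alpha t}$ back to the $W^{1,2}$ Sobolev inequality is the Varopoulos/Riesz-potential argument (bound the Bessel kernel $(c-\Delta)^{-1/2}$ pointwise using $\int_0^\infty t^{-1/2}e^{-ct}p_t\,dt$, apply Hardy--Littlewood--Sobolev, and absorb the low-frequency contribution into the $B\|v\|_2^2$ term). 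This does not affect the validity of your cycle, since that equivalence is classical, but if you were to write out the (IV) $\Rightarrow$ (III) $\Rightarrow$ (I) leg in detail, Moser iteration would not be the tool you reach for.
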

Due to Lemma \ref{lm1}, to prove Theorem \ref{th1}, we only need to prove some log-Sobolev inequalities for any $t\in[0,T_0)$. By the monotonicity of $W$ entropy, we obtain the following log-Sobolev inequality.
\begin{lemma}[Log-Sobolev Inequality]\label{lm2}
Under the assumptions of Theorem \ref{th1}. Then for any $t\in[0,T_0)$,  $v\in W^{1,2}(M,g(t))$ with $\int_M v^2d\mu(g(t))=1$ and any $\epsilon>0$, we have
\begin{eqnarray}
\int_M v^2\ln v^2d\mu(g(t))&\leq&\epsilon^2\int_M\big(4|\nabla v|^2+Sv^2\big)d\mu(g(t))-n\ln\epsilon\nonumber\\
&&+(t+\epsilon^2)B_0A_0^{-1}+\frac{n}{2}\ln(\frac{nA_0}{2e}).\label{S3}
\end{eqnarray}
\end{lemma}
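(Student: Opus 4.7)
The plan is to deduce the log-Sobolev inequality at time $t$ from the initial one by exploiting the monotonicity of the $W$ entropy proved in Proposition \ref{p1}. As a preparatory step, I would first apply Lemma \ref{lm1} (invoking the equivalence between the Sobolev and log-Sobolev inequalities for the quadratic form $\int(|\nabla w|^2+\frac{1}{4}Sw^2)\,d\mu(g_0)$ in place of the plain Dirichlet form) to convert the hypothesized initial Sobolev inequality into an initial log-Sobolev inequality: for every $w\in W^{1,2}(M,g_0)$ with $\int w^2\,d\mu(g_0)=1$ and every $\alpha>0$,
$$
\int_M w^2\ln w^2\,d\mu(g_0)\le \frac{\alpha^2}{4}\int_M(4|\nabla w|^2+Sw^2)\,d\mu(g_0)-n\ln\alpha+B_0A_0^{-1}\alpha^2+\frac{n}{2}\ln\frac{nA_0}{2e}.
$$

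Next, fix $t\in[0,T_0)$, $v\in W^{1,2}(M,g(t))$ with $\int v^2\,d\mu(g(t))=1$, and $\epsilon>0$. I would run the conjugate heat equation (\ref{O2}) backward in time with terminal data $u(\cdot,t)=v^2$, obtaining a positive solution $u(\cdot,s)$ on $[0,t]$; the computation at the beginning of Section 2 preserves $\int u\,d\mu(g(s))=1$ for every $s\in[0,t]$, so $v_0:=\sqrt{u(\cdot,0)}$ is a valid test function for the initial log-Sobolev. I would then select the scaling factor $\tau$ in (\ref{Wentropy}) so that $\tau(t)=\epsilon^2$, forcing $\tau(0)=t+\epsilon^2$. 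Substituting $u=v^2$ in (\ref{Wentropy}) and using $|\nabla u|^2/u=4|\nabla v|^2$ gives
$$
W(t)=\epsilon^2\int_M(4|\nabla v|^2+Sv^2)\,d\mu(g(t))-\int_M v^2\ln v^2\,d\mu(g(t))-\frac{n}{2}\ln(4\pi\epsilon^2)-n,
$$
and an analogous formula holds for $W(0)$ in terms of $v_0$ and $\tau(0)$.

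The decisive step is to apply the initial log-Sobolev of the first paragraph to $v_0$ with the coupling $\alpha=2\sqrt{t+\epsilon^2}$, so that $\alpha^2/4$ equals $\tau(0)$. With this choice the quadratic terms cancel exactly in the expression for $W(0)$, producing a lower bound that depends only on $t,\epsilon,n,A_0,B_0$. The monotonicity from Proposition \ref{p1} then yields $W(t)\ge W(0)$; rearranging this, substituting the explicit form of $W(t)$, and finally simplifying the logarithmic terms using $\alpha^2=4(t+\epsilon^2)$ and $\tau(0)=t+\epsilon^2$, gives (\ref{S3}).

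The main obstacle is the bookkeeping in the cancellation step: the parameter $\alpha$ in the initial log-Sobolev must be tuned precisely against $\tau(0)$ so that the quadratic functional of the unknown backward solution $v_0$ drops out, leaving only quantities one can control in terms of the stated constants. A wrong alignment would leave an uncontrollable $\int(4|\nabla v_0|^2+Sv_0^2)\,d\mu(g_0)$ term on the right-hand side. Once the coupling is arranged, the rest is routine algebra on the logarithmic constants, and the linear-in-$t$ penalty $(t+\epsilon^2)B_0A_0^{-1}$ appears precisely from the $B_0A_0^{-1}\alpha^2$ term in the initial log-Sobolev.
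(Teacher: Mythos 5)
Your proof is essentially the same as the paper's: both exploit the monotonicity of the $W$ entropy to transfer control from the initial slice to time $t$, and both obtain the time-zero input by converting the hypothesized initial Sobolev inequality into a log-Sobolev inequality with the scaling parameter tuned to $\tau(0)=t+\epsilon^2$. The only structural difference is packaging: the paper first writes down the consequence of $W$-monotonicity (its inequality (22)) and then bounds the right-hand side using Jensen's inequality plus the elementary estimate $\ln z\le yz-\ln y-1$ applied to the initial Sobolev inequality, whereas you first convert the initial Sobolev inequality to an initial log-Sobolev (quoting Lemma 3.1) and then feed it into the monotonicity. These are the same steps in a different order. One small caution: Lemma 3.1 as stated only asserts equivalence ``up to constants,'' so to get the explicit initial log-Sobolev with the specific coefficients you write, one must actually carry out the Jensen-plus-$\ln z\le yz-\ln y-1$ derivation, which is exactly what the paper does inline; simply citing Lemma 3.1 does not by itself justify your precise formula. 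Relatedly, if you track the algebra with $\alpha=2\sqrt{t+\epsilon^2}$ you get $4(t+\epsilon^2)B_0A_0^{-1}$ (and an extra $-n\ln 2$) rather than $(t+\epsilon^2)B_0A_0^{-1}$ in (\ref{S3}); this discrepancy is harmless for Theorem \ref{th1} since only the dependence of the constants on $A_0$, $(1+t)B_0$, $n$ matters, but it means your cancellation step as described does not reproduce (\ref{S3}) verbatim.
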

\begin{proof}
For a fixed $t_0\in[0,T_0)$ and any $\epsilon>0$, we set
$$\tau(t)=\epsilon^2+t_0-t.$$
Recall that $W$ entropy is defined by
\[W(g,f,t)=\int_M\big(\tau(S+|\nabla f|^2)+f-n\big)ud\mu(g(t)).\]
From the monotonicity of the $W$ entropy in Proposition \ref{p1}, we deduce
\begin{equation}
\inf_{\int_M u d\mu(g(t_0))=1}W(g(t_0),f,\epsilon^2)\geq \inf_{\int_M u_0d\mu(g(0))=1}W(g(0),f_0,t_0+\epsilon^2),
\label{21}\end{equation}
one can find a more detailed proof of this property in section 3 of \cite{P1}. Here $f_0$ and $f$ are given by the formulas
\[
u_0=\frac{e^{-f_0}}{\big(4\pi(t_0+\epsilon^2)\big)^{\frac{n}{2}}},\quad u=\frac{e^{-f}}{(4\pi\epsilon^2)^{\frac{n}{2}}}.
\]
Using these notations we  rewrite (\ref{21}) as
\begin{eqnarray*}
&&\inf_{\int ud\mu(g(t_0))=1}\int_M\left(\epsilon^2(S+|\nabla \ln u|^2)-\ln u-\frac{n}{2}\ln4\pi\epsilon^2\right)ud\mu(g(t_0))\\
&\geq&\inf_{\int u_0d\mu(g(0))=1}\left(\int_M\big((\epsilon^2+t_0)(S+|\nabla
\ln u_0|^2)-\ln u_0
-\frac{n}{2}\ln4\pi(t_0+\epsilon^2)\big)u_0d\mu(g(0))\right).
\end{eqnarray*}
Set $v=\sqrt{u}$ and $v_0=\sqrt{u_0}$, the above inequality yields
\begin{eqnarray}
&&\inf_{\int v^2d\mu(g(t_0))=1}\int_M\big[\epsilon^2(Sv^2+4|\nabla v|^2)-v^2\ln v^2\big]d\mu(g(t_0))-\frac{n}{2}\ln\epsilon^2\nonumber\\
&\geq&\inf_{\int v_0^2d\mu(g(0))=1}\int_M\left((\epsilon^2+t_0)(Sv_0^2+4|\nabla
v_0|^2)-v_0^2\ln v_0^2\right)d\mu(g(0))
-\frac{n}{2}\ln(t_0+\epsilon^2).\nonumber\\\label{22}
\end{eqnarray}
Notice that $\ln x$ is a concave function and $\int v_0^2d\mu(g(0))=1$, thus applying Jensen's inequality we deduce
\[\int_Mv_0^2\ln v_0^{q-2}d\mu(g(0))\leq\ln \int v_0^{q-2}v_0^2d\mu(g(0)),\]
i.e.
$$
\int_Mv_0^2\ln v_0^2d\mu(g(0))\leq\frac{n}{2}\ln\|v_0\|_q^2,
$$
where $q=\frac{2n}{n-2}$. By the assumption that the Sobolev inequality holds for the initial time $t=0$, we have
\[\int_Mv_0^2\ln v_0^2d\mu(g(0))\leq\frac{n}{2}\ln\left(A_0\int_M(4|\nabla v_0|^2+Sv_0^2)d\mu(g(0))+B_0\right).\]
By the inequality \[\ln z\leq yz-\ln y-1,\]
for any $y,z>0$, we have
\begin{equation*}
\int_Mv_0^2\ln v_0^2d\mu(g(0))\leq\frac{n}{2}y\Big(A_0\int_M(4|\nabla v_0|^2+Sv_0^2)d\mu(g(0))+B_0\Big)-\frac{n}{2}\ln y-\frac{n}{2}.
\end{equation*}
In the above inequality we choose $y=2\frac{t_0+\epsilon^2}{n A_0}$,
we then deduce
\begin{eqnarray*}
\int_Mv_0^2\ln v_0^2d\mu(g(0))&\leq&(t_0+\epsilon^2)\int_M(4|\nabla v_0|^2+Sv_0^2)d\mu(g(0))\\
&&+\frac{(t_0+\epsilon^2)B_0}{A_0}-\frac{n}{2}\ln\frac{2(t_0+\epsilon^2)}{nA_0}-\frac{n}{2}.
\label{S5}\end{eqnarray*}
Substituting the above inequality to the right-hand side of (\ref{22}), we reach the log-Sobolev inequality:
\begin{eqnarray*}
\int_M v^2\ln v^2d\mu(g(t_0))&\leq&\epsilon^2\int_M\big(4|\nabla v|^2+Sv^2\big)d\mu(g(t_0))-n\ln\epsilon\\
&&+(t_0+\epsilon^2)B_0A_0^{-1}+\frac{n}{2}\ln(\frac{nA_0}{2e}).
\end{eqnarray*}
Thus we have completed the proof of the log-Sobolev inequality (\ref{S3}), hence Theorem \ref{th1}.
\end{proof}

Since $(M, g_0)$ is a closed Riemannian manifold, the Sobolev inequality holds as described in section 4.1 in \cite{Z3}, i.e. for any $v\in W^{1,2}(M)$, there exist positive constants $A$ and $B$  depending only on the initial metric $g_0$ such that
\begin{eqnarray}
\Big(\int_M v^{\frac{2n}{n-2}}d\mu\big(g(0)\big)\Big)^{\frac{n-2}{n}}&\leq&A\int_M|\nabla v|^2d\mu\big(g(0)\big)+B\int_Mv^2d\mu\big(g(0)\big).\nonumber\\
\label{44}
\end{eqnarray}

Recall that  $\lambda_0$ is the first eigenvalue of $F$ entropy as characterized in (\ref{Fentropy}), i.e.
\begin{equation}
\lambda_0=\inf_{\|v\|_2=1}\int_M(4|\nabla v|^2+Sv^2)d\mu\big(g(0)\big).
\end{equation}
This eigenvalue has been studied widely and is a very powerful tool for the understanding of Riemannian manifolds. One can find more details in \cite{LJ}.
If $\lambda_0>0$, by  Sobolev inequality (\ref{44}), we know that  the assumption of Sobolev inequality in Theorem \ref{th1} at initial time holds with $B_0=0$, i.e.
\begin{eqnarray}
\Big(\int_M v^{\frac{2n}{n-2}}d\mu\big(g_0\big)\Big)^{\frac{n-2}{n}}&\leq& \tilde{A}_0 \int_M\big(|\nabla v|^2+\frac{1}{4}Sv^2\big)d\mu\big(g_0\big)\label{45}
\end{eqnarray}
where $\tilde{A}_0$ depends only on initial metric $g_0$ and the first eigenvalue of $F$ entropy $\lambda_0$. Furthermore, the log-Sobolev inequality (\ref{S3}) in Lemma \ref{lm2} holds with $B_0=0$. Therefore from Theorem \ref{th1}, we have the following corollary:
\begin{corollary}\label{co1}
Let  $(g, \phi)(t)$ be a solution of the system (\ref{O1}). Assume $\lambda_0>0$.
 Then there exists a positive number $A_0$, depending only on initial metric $g_0$ and $\lambda_0$,  such that for all $v\in W^{1,2}(M,g(t))$, $t\in[0,T_0)$, it holds that
\begin{eqnarray}
\Big(\int_M v^{\frac{2n}{n-2}}d\mu\big(g(t)\big)\Big)^{\frac{n-2}{n}}&\leq& A_0 \int_M \big(|\nabla v|^2+\frac{1}{4}Sv^2\big)d\mu\big(g(t)\big).\label{46}
\end{eqnarray}
\end{corollary}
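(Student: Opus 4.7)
The plan is to combine Theorem \ref{th1} applied with $B_0=0$ with the preservation of the lower bound $\lambda_0>0$ for the first eigenvalue of the Schr\"odinger operator $-4\Delta+S$ along the flow, which is an immediate consequence of the $F$-entropy monotonicity in Proposition \ref{p1}.

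The first step is to verify that the hypothesis of Theorem \ref{th1} is satisfied at $t=0$ with $B_0=0$, i.e.\ that (\ref{45}) holds. Starting from the standard Sobolev inequality (\ref{44}) on $(M,g_0)$, I would use that $S$ is bounded below on the compact initial manifold by some constant $-S_0$ to write
\begin{equation*}
\|\nabla v\|_{L^2(g_0)}^2 \leq \int_M\Big(|\nabla v|^2+\frac{1}{4}Sv^2\Big)d\mu(g_0) + \frac{S_0}{4}\|v\|_{L^2(g_0)}^2,
\end{equation*}
and then use the spectral inequality $\|v\|_{L^2(g_0)}^2 \leq \frac{4}{\lambda_0}\int_M(|\nabla v|^2+\frac{1}{4}Sv^2)d\mu(g_0)$, equivalent to $\lambda_0>0$, to absorb the remaining $L^2$ terms. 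This yields (\ref{45}) with some $\tilde A_0$ depending only on $g_0$ and $\lambda_0$. Applying Theorem \ref{th1} with initial data $(\tilde A_0,0)$, and noting that the output constants $A(t),B(t)$ depend only on $\tilde A_0$, $(1+t)B_0=0$, and $n$, these constants become $t$-independent; call them $A_\ast,B_\ast$. Hence for every $t\in[0,T_0)$,
\begin{equation*}
\Big(\int_M v^{\frac{2n}{n-2}}d\mu(g(t))\Big)^{\frac{n-2}{n}} \leq A_\ast\int_M\Big(|\nabla v|^2+\frac{1}{4}Sv^2\Big)d\mu(g(t)) + B_\ast\int_M v^2\,d\mu(g(t)).
\end{equation*}

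To eliminate the last term I would show that $\lambda(t):=\inf_{\|v\|_{L^2(g(t))}=1}\int_M(4|\nabla v|^2+Sv^2)d\mu(g(t))\geq \lambda_0$ for every $t\in[0,T_0)$, by the following Perelman-type argument. Fix $t_0\in[0,T_0)$ and let $v_0$ be a minimizer for $\lambda(t_0)$ normalized by $\|v_0\|_{L^2(g(t_0))}=1$; set $u(\cdot,t_0)=v_0^2$ and solve the conjugate heat equation (\ref{O2}) backward from $t_0$ down to $0$, so that $u(\cdot,t)$ satisfies the normalization (\ref{E1}) for all $t\in[0,t_0]$. Proposition \ref{p1}(I) then gives $\lambda(t_0)=F(t_0)\geq F(0)\geq \lambda_0$, the last step being the variational characterization at $t=0$ applied to $\sqrt{u(\cdot,0)}$. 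The resulting bound $\int_M v^2\,d\mu(g(t))\leq \frac{4}{\lambda_0}\int_M(|\nabla v|^2+\frac{1}{4}Sv^2)d\mu(g(t))$ absorbs $B_\ast\|v\|_{L^2(g(t))}^2$ into the main term, producing (\ref{46}) with $A_0=A_\ast+4B_\ast/\lambda_0$, depending only on $g_0$ and $\lambda_0$.

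The most delicate step is the monotonicity of $\lambda(t)$, but it is the classical Perelman-type argument relying solely on the $F$-entropy monotonicity of Proposition \ref{p1}, so it poses no fundamental difficulty beyond careful execution.
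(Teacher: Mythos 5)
Your proof is correct, but it takes a slightly different route from the paper in the final step. You and the paper agree on the first step: absorbing the $B\int v^2$ term in the standard initial Sobolev inequality~(\ref{44}) by using both a lower bound for $S$ on the compact initial slice and the spectral gap $\lambda_0>0$, which gives~(\ref{45}) with $B_0=0$. Where you diverge is in how you eliminate the $L^2$ term at a later time $t$. The paper's (terse) argument is that taking $B_0=0$ in Lemma~\ref{lm2} makes the $(t+\epsilon^2)B_0A_0^{-1}$ term vanish identically, so the log-Sobolev inequality at time $t$ has the form~(II) of Lemma~\ref{lm1} with $B=0$; the equivalence in Lemma~\ref{lm1} then returns a Sobolev inequality at time $t$ with no $L^2$ term, i.e.\ (\ref{46}) directly, with no further absorption needed. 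You instead allow for a possibly nonzero $B_\ast$ coming from Theorem~\ref{th1}, and then prove a separate lemma --- that $\lambda(t)\geq\lambda_0$ for all $t\in[0,T_0)$ by solving the conjugate heat equation backward from a normalized positive ground state at time $t_0$ and invoking the $F$-entropy monotonicity of Proposition~\ref{p1}(I) --- to absorb the $B_\ast\int v^2$ term. That eigenvalue-monotonicity lemma is a correct Perelman-type argument (the ground state is positive, so $u(\cdot,t_0)=v_0^2>0$ propagates to a positive solution; the normalization $\int u\,d\mu=1$ is preserved; $F$ evaluated on $u=v^2$ with $\|v\|_2=1$ equals the Rayleigh quotient), and it is robust: it does not hinge on carefully tracking that $B_0=0$ propagates to $B(t)=0$ through the equivalence lemma. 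The trade-off is that your route proves an extra fact (the monotonicity of the first eigenvalue along the flow, which is not explicitly stated in the paper and could be of independent interest) at the cost of being somewhat longer than the paper's argument, which instead leans on the precise form of the constants in Lemma~\ref{lm2}.
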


\section{The proof of Theorem \ref{th2}}
\setcounter{equation}{0}
In this section, we establish a certain Gaussian type upper bound for the fundamental solutions of the conjugate heat equation with proper lower bound of Ricci curvature. This Gaussian upper bound in Ricci flow was proved by Zhang in \cite{Z1} with the assumption of the lower bound of injectivity with the help of  Sobolev inequality by Heybey (c.f. in \cite{Heb}), but here by using the uniform Sobolev inequality in corollary 4.1, we derive the similar Gaussian upper bound  without any assumption of the lower bound of injectivity. To prove the Gaussian upper bound, we need the following interpolation theorem first. The proof is standard by using maximum principle (see also \cite{LY, MH, N2, LK}).
\begin{theorem}
Let $(g, \phi)(t)$ be a solution of (\ref{O1}) and $u(x,t)$ be a positive solution to heat equation
\begin{equation}
\triangle u-\partial_tu=0 \label{N1}
\end{equation}
for $t\in [0,T]$. Then it holds that
\begin{equation}
\frac{|\nabla u(x,t)|}{u(x,t)}\leq \sqrt{\frac{1}{t}}\sqrt{\ln \frac{A}{u(x,t)}}\label{PI1}
\end{equation}
for $(x,t)\in M\times[0,T]$. Here $A=\sup_{M\times[0,T]}u$. \\
Moreover, for any $\delta>0$, $x,y\in M$ and $0<t<T$, the following interpolation inequality holds
\begin{equation}
u(y,t)\leq A^{\frac{\delta}{1+\delta}}u^{\frac{1}{1+\delta}}(x,t)\exp(\frac{d^2(x,y,t)}{4t\delta}).\label{PI2}
\end{equation}\label{Thm5}
\end{theorem}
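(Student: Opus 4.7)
The plan is to prove the gradient bound (\ref{PI1}) by a Hamilton-type maximum-principle argument on a suitable auxiliary function, and then to deduce the interpolation inequality (\ref{PI2}) by integrating (\ref{PI1}) along a minimizing geodesic of the fixed-time metric $g(t)$. First, substitute $f:=\ln(A/u)\geq 0$ (after replacing $A$ by $A+\eps$ and letting $\eps\downarrow 0$ at the end to ensure smoothness). A short calculation from $\Delta u=\partial_t u$ gives
\[
\partial_t f=\Delta f-|\nabla f|^2.
\]
Setting $P:=t|\nabla f|^2-f$, one has $P(\cdot,0)=-f(\cdot,0)\leq 0$, and the aim is to show $P\leq 0$ on $M\times[0,T]$; since $|\nabla f|=|\nabla u|/u$ and $f=\ln(A/u)$, this is exactly (\ref{PI1}).

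Next I would compute $(\partial_t-\Delta)P$ using $\partial_t g^{ij}=2S^{ij}$ from (\ref{O1}), the Bochner identity $\Delta|\nabla f|^2=2|\nabla^2 f|^2+2\langle\nabla f,\nabla\Delta f\rangle+2\Ric(\nabla f,\nabla f)$, and the commutation $\nabla\partial_t f=\nabla\Delta f-2f_{ij}\nabla^j f$. A routine expansion yields
\[
(\partial_t-\Delta)P=2|\nabla f|^2+2t(S^{ij}-R^{ij})f_if_j-4tf^if^jf_{ij}-2t|\nabla^2 f|^2.
\]
The crucial observation, specific to the extended Ricci flow, is that $S_{ij}-R_{ij}=-2\phi_i\phi_j$, so $2t(S^{ij}-R^{ij})f_if_j=-4t(\nabla\phi\cdot\nabla f)^2\leq 0$; this cancellation is exactly what removes any need for a curvature hypothesis on $\Ric$. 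At a spatial maximum of $P(\cdot,t)$ on the closed manifold $M$, the vanishing condition $\nabla P=0$ reads $2tf^jf_{ij}=f_i$; contracting with $f^i$ gives $4tf^if^jf_{ij}=2|\nabla f|^2$. Substituting back leaves $(\partial_t-\Delta)P=-2t|\nabla^2 f|^2-4t(\nabla\phi\cdot\nabla f)^2\leq 0$ at any spatial maximum. Since $\Delta P\leq 0$ there as well, Hamilton's standard spatial-maximum argument gives $\partial_t\max_{x\in M}P(x,t)\leq 0$ in the a.e.\ sense, and combined with $P(\cdot,0)\leq 0$ yields $P\leq 0$ throughout $M\times[0,T]$.

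For (\ref{PI2}), fix $t\in(0,T)$ and let $\gamma:[0,d]\to M$ be a unit-speed $g(t)$-minimizing geodesic from $y$ to $x$ with $d=d(x,y,t)$. Setting $\eta(s):=\ln(A/u(\gamma(s),t))\geq 0$, the bound (\ref{PI1}) gives $|\eta'(s)|\leq|\nabla u|/u\leq\sqrt{\eta(s)/t}$, hence $|(\sqrt\eta)'|\leq 1/(2\sqrt t)$. Integration yields $\sqrt{\eta(d)}\leq\sqrt{\eta(0)}+d/(2\sqrt t)$; squaring via the Young inequality $(a+b)^2\leq(1+\delta)a^2+(1+\delta^{-1})b^2$ gives $\ln(A/u(x,t))\leq(1+\delta)\ln(A/u(y,t))+(1+\delta)d^2/(4t\delta)$, which rearranges to (\ref{PI2}). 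I expect the main technical obstacle to be the $(\partial_t-\Delta)P$ computation and spotting the cancellation $2(S^{ij}-R^{ij})f_if_j=-4(\nabla\phi\cdot\nabla f)^2$ coming from the evolving metric; everything else is a standard spatial-maximum argument followed by a geodesic integration.
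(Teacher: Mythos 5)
Your argument is correct and reaches both (\ref{PI1}) and (\ref{PI2}), but the first half takes a genuinely different (and equally standard) route from the paper's. The paper works directly with the $u$-weighted quantity
\[
W:=t\frac{|\nabla u|^2}{u}-u\ln\frac{A}{u}=uP,\qquad P:=t|\nabla f|^2-f,\ f:=\ln\frac{A}{u},
\]
and shows the pointwise inequality $(\Delta-\partial_t)W=\frac{4}{u}|d\phi(\nabla u)|^2+\frac{2}{u}\left|u_{ij}-\frac{u_iu_j}{u}\right|^2\geq0$ holds \emph{everywhere}, after which the parabolic maximum principle gives $W\leq0$ from $W(\cdot,0)\leq0$. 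You instead compute $(\partial_t-\Delta)P$, which is \emph{not} signed in general because of the leftover $2|\nabla f|^2-4tf^if^jf_{ij}$, and you dispose of this by evaluating at a spatial maximum where $\nabla P=0$ forces $4tf^if^jf_{ij}=2|\nabla f|^2$; you then invoke Hamilton's maximum-principle lemma on $\max_xP(\cdot,t)$. These are exactly the two classical dressings of Hamilton's gradient estimate. In both, the geometry-specific ingredient is identical: the $2S^{ij}$ from $\partial_tg^{ij}$ meets the $-2R^{ij}$ from Bochner, and $S_{ij}-R_{ij}=-2\phi_i\phi_j$ leaves a favorable $-4t(\nabla\phi\cdot\nabla f)^2$, which is why no extra curvature hypothesis is needed. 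The paper's choice is slightly slicker because multiplying by $u=e^{-f}$ produces the cross term $2u\nabla f\cdot\nabla P$ that absorbs the bad terms pointwise, so one never needs the first-derivative test at a maximum; your version trades that for the more mechanical but more widely applicable spatial-max argument. Your treatment of (\ref{PI2}) by integrating $|(\sqrt\eta)'|\leq1/(2\sqrt t)$ along a $g(t)$-minimizing geodesic and then squaring with Young's inequality $(a+b)^2\leq(1+\delta)a^2+(1+\delta^{-1})b^2$ is, up to cosmetics, exactly the paper's argument.
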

\begin{proof}
Using the heat equation (\ref{N1}), we have the following computations:
\begin{eqnarray}
(\triangle-\partial_t)(u\ln(\frac{A}{u}))&=&\triangle u \ln\frac{A}{u}+u\triangle(\ln\frac{A}{u})+2\nabla u\nabla \ln \frac{A}{u}-\partial_tu \ln\frac{A}{u}-u\partial_t(\ln\frac{A}{u})\nonumber\\
&=&\triangle u\ln \frac{A}{u}+u(-\frac{\triangle u}{u}+\frac{|\nabla u|^2}{u^2})-2\frac{|\nabla u|^2}{u}
-\triangle u\ln\frac{A}{u}+\partial_t u\nonumber\\
&=&-\frac{|\nabla u|^2}{u},\label{PI3}
\end{eqnarray}
\begin{eqnarray}
\triangle(\frac{|\nabla u|^2}{u})&=&\frac{\triangle |\nabla u|^2}{u}+\triangle(\frac{1}{u})|\nabla u|^2+2\nabla|\nabla u|^2\nabla(\frac{1}{u})\nonumber\\
&=&\frac{\triangle |\nabla u|^2}{u}+(\frac{2|\nabla u|^2}{u^3}-\frac{\triangle u}{u^2})|\nabla u|^2
-4\frac{u_iu_ju_{ij}}{u^2},\label{PI4}
\end{eqnarray}
and
\begin{eqnarray}
\partial_t(\frac{|\nabla u|^2}{u})=\frac{\partial_t|\nabla u|^2}{u}-\frac{|\nabla u|^2}{u^2}\partial_tu
=\frac{2\langle\nabla u,\nabla \triangle u\rangle+2S_{ij}u_iu_j}{u}-\frac{|\nabla u|^2}{u^2}.\label{PI5}
\end{eqnarray}
Combining (\ref{PI4}) and (\ref{PI5}) together, we derive
\begin{eqnarray}
(\triangle -\partial_t)(\frac{|\nabla u|^2}{u})&=&\frac{\triangle |\nabla u|^2}{u}+\frac{2|\nabla u|^4}{u^3}-4\frac{u_iu_ju_{ij}}{u^2}-\frac{2\langle\nabla u,\nabla \triangle u\rangle+2S_{ij}u_iu_j}{u}\nonumber\\
&=&\frac{2u_{ij}^2+4u_iu_j\phi_i\phi_j}{u}+\frac{2|\nabla u|^4}{u^3}-4\frac{u_iu_ju_{ij}}{u^2}\nonumber\\
&=&\frac{4}{u}|d\phi(\nabla u)|^2+\frac{2}{u}|u_{ij}-\frac{u_iu_j}{u}|^2.\label{PI6}
\end{eqnarray}
Combining (\ref{PI3}) and (\ref{PI6}), we have
\begin{eqnarray}
(\triangle -\partial_t)(\frac{t|\nabla u|^2}{u}-u\ln\frac{A}{u})&=&-\frac{|\nabla u|^2}{u}+\frac{4}{u}|d\phi(\nabla u)|^2
+\frac{2}{u}|u_{ij}-\frac{u_iu_j}{u}|^2+\frac{|\nabla u|^2}{u}\nonumber\\
&=&\frac{4}{u}|d\phi(\nabla u)|^2+\frac{2}{u}|u_{ij}-\frac{u_iu_j}{u}|^2\nonumber\\
&\geq&0.\label{PI7}
\end{eqnarray}
Notice that $A=\sup_{M\times[0,T]}u$, then at $t=0$ we have
\[\frac{t|\nabla u|^2}{u}-u\ln\frac{A}{u}=-u\ln\frac{A}{u}\leq0.\]
From (\ref{PI7}), the maximum principle implies that
$$
\frac{t|\nabla u|^2}{u}-u\ln\frac{A}{u}\leq0,
$$
which gives (\ref{PI1}).

Set
$
\ell(x,t)=\ln\frac{A}{u(x,t)},
$
then inequality (\ref{PI1}) yields
\[|\nabla \sqrt{\ell(x,t)}|=\frac{1}{2}|\frac{\nabla u}{u \sqrt{\ell}}|\leq\frac{1}{\sqrt{4t}}.\]
Thus, for any $x,y\in M$, we can integrate the above inequality along a minimizing geodesic joining $x$ and $y$ to get
$$
\sqrt{\ln\frac{A}{u(x,t)}}\leq\sqrt{\ln\frac{A}{u(y,t)}}+\frac{d(x,y,t)}{\sqrt{4t}},
$$
Thus, for any $\delta >0$ we have
\begin{eqnarray*}
\ln\frac{A}{u(x,t)}&\leq&\ln\frac{A}{u(y,t)}+\frac{d^2(x,y,t)}{4t}+\sqrt{\ln\frac{A}{u(y,t)}}\frac{d(x,y,t)}{\sqrt{t}}\nonumber\\
&\leq&\ln\frac{A}{u(y,t)}+\frac{d^2(x,y,t)}{4t}+\delta\ln\frac{A}{u(y,t)}+\frac{d^2(x,y,t)}{4t\delta},
\end{eqnarray*}
which immediately  gives (\ref{PI2}).
\end{proof}

Now we turn to  prove Theorem \ref{th2}. With the help of the uniform Sobolev inequality in Corollary \ref{co1} and the above interpolation theorem, we will begin with the traditional method of establishing a mean value inequality via Moser's iteration and a weighted estimate in the spirit of Davies in \cite{Da} to give the full proof of the Gaussian type upper bound (see also \cite{L,Z2}).  
\begin{proof}[Proof of Theorem \ref{th2}]
We divide the proof into two steps.\\
{\em Step 1.} By using Morse's iteration, we prove a mean value inequality for the positive solution $u$ of conjugate equation (\ref{O2}).\\
For any arbitrary constant $p\geq1$, it follows trivially that
\begin{equation}
\Delta u^p-pSu^p+\partial_t u^p\geq0.\label{BRF2}
\end{equation}
Define the region
\[Q_{\sigma r}:=\left\{(y,s)\mid y\in M, t\leq s\leq t+(\sigma r)^2, d(x,y,s)\leq \sigma r\right\},\]
with $r>0$, $1<\sigma \leq 2$.

Let $\varphi(\rho): [0,+\infty)\rightarrow [0,1]$ be a smooth function satisfying: $|\varphi'|\leq\frac{2}{(\sigma -1)r}$, $\varphi'\leq0$, $\varphi\geq0$, $\varphi(\rho)=1$ when $0\leq\rho\leq r$, and $\varphi(\rho)=0$ when $\rho\geq \sigma r$. Let $\eta(s): [0,+\infty)\rightarrow [0,1]$ be a smooth function satisfying: $|\eta'|\leq\frac{2}{(\sigma -1)^2r^2}$, $\eta'\leq0$, $\eta\geq0$, $\eta(s)=1$ when $ s\leq t+r^2$, and $\eta(s)=0$ when $t+(\sigma r)^2\leq s\leq T$.
Define a cut-off function $\psi(y,s)$ by
\[\psi(y,s)=\varphi(d(y,x,s))\eta(s).\]
Writing $\omega=u^p$, multiplying $\omega \psi^2$ to (\ref{BRF2}) for $p\geq1$ and integrating by parts yields
\begin{eqnarray}
\int_{Q_{\sigma r}}\nabla(\omega \psi^2)\nabla\omega dg(y,s)ds+p\int_{Q_{\sigma r}}S\omega^2 \psi^2dg(y,s)ds
\leq\int_{Q_{\sigma r}} (\partial_s\omega) \omega\psi^2dg(y,s)ds. \nonumber\\
\label{BRF3}
\end{eqnarray}
Integrating by parts, the right hand side of (\ref{BRF3}) yields
\begin{eqnarray*}
\int_{Q_{\sigma r}}(\partial_s\omega)\omega\psi^2dg(y,s)ds&=&-\int_{Q_{\sigma r}}\omega^2\psi\partial_s\psi dg(y,s)ds +\frac{1}{2}\int_{Q_{\sigma r}} (\psi\omega)^2Sdg(y,s)ds
\nonumber\\&&-\frac{1}{2}\int_{B_{\sigma r}(t)}(\psi\omega)^2dg(y,t).
\end{eqnarray*}
By the non-negativity of $Sy$ and using the identity (c.f. \cite{CLN, BL})
\begin{equation*}
\partial_s d(x,y,s)=-\int_0^{d(x,y,s)} Sy(\gamma'(\tau),\gamma'(\tau))d\tau\leq0,
\end{equation*}
we have
\[\partial_s\psi=\eta(s)\varphi'(d(y,x,s))\partial_sd(x,y,s)+\varphi(d(y,x,s))\eta'(s)\geq\varphi(d(y,x,s))\eta'(s).\]
Hence
\begin{eqnarray}
\int_{Q_{\sigma r}}(\partial_s\omega)\omega\psi^2dg(y,s)ds&\leq&\int_{Q_{\sigma r}}\omega^2\psi\varphi(d(y,x,s))|\eta'(s)| dg(y,s)ds\nonumber\\
&&+\frac{1}{2}\int_{Q_{\sigma r}} (\psi\omega)^2Sdg(y,s)ds-\frac{1}{2}\int_{B_{\sigma r}(t)}(\psi\omega)^2dg(y,t). \label{BRF6}
\end{eqnarray}
By direct calculation, we get
\begin{eqnarray}
\int_{Q_{\sigma r}}\nabla(\omega \psi^2)\nabla\omega dg(y,s)ds=\int_{Q_{\sigma r}}|\nabla(\omega \psi)|^2dg(y,s)ds
-\int_{Q_{\sigma r}} |\nabla\psi|^2\omega^2dg(y,s)ds. \nonumber\\\label{BRF4}
\end{eqnarray}
From (\ref{BRF3}), (\ref{BRF6}), (\ref{BRF4}), and  the assumptions of $p\geq1$ and $S\geq0$, we deduce
\begin{eqnarray}
&&\int_{Q_{\sigma r}}|\nabla(\omega\psi)|^2dg(y,s)ds+\frac{1}{2}\int_{Q_{\sigma r}}S(\omega\psi)^2+\frac{1}{2}\int_{B_{\sigma r}(t)}(\psi\omega)^2dg(y,t)\nonumber\\
&&\leq\int_{Q_{\sigma r}}\omega^2\psi\varphi(d(y,x,s))|\eta'(s)| dg(y,s)ds+\int_{Q_{\sigma r}}|\nabla \psi|^2\omega^2dg(y,s)ds\nonumber\\
&&\leq\frac{c}{(\sigma-1)^2r^2}\int_{Q_{\sigma r}}\omega^2dg(y,t).\label{BRF7}
\end{eqnarray}
Using H\"{o}lder inequality, we get
\begin{equation}
\int(\psi\omega)^{2(1+\frac{2}{n})}dg\leq(\int(\psi\omega)^{\frac{2n}{n-2}}dg)^{\frac{n-2}{n}}(\int(\psi\omega)^2dg)^{\frac{2}{n}}.
 \label{BRF8}
\end{equation}
From Corollary \ref{co1}, we know that for any $t\in(0,T)$, the following Sobolev imbedding inequality holds:
\begin{equation}
\left(\int(\psi\omega)^{\frac{2n}{n-2}}dg(s)\right)^{\frac{n-2}{n}}\leq A_0\int\left(|\nabla(\psi\omega)|^2+S(\psi\omega)^2\right)dg(s),
\label{BRF9}\end{equation}
where $A_0$ depends  only on dimension $n$, $\lambda_0$ and initial metric $g_0$.\\
Substituting (\ref{BRF9}) to (\ref{BRF8}), we get
\begin{equation*}
\int_{B_{\sigma r}(s)}(\psi\omega)^{2(1+\frac{2}{n})}dg(s)\leq A_0\int_{B_{\sigma r}(s)}\left(|\nabla(\psi\omega)|^2+S(\psi\omega)^2\right)dg(\int_{B_{\sigma r}(s)}
(\psi\omega)^2dg)^{\frac{2}{n}}
\end{equation*}
Setting $\theta=1+\frac{2}{n}$, integrating the above inequality with respect to $s$ on $[t,t+(\sigma r)^2]$ and using (\ref{BRF7}), we reach
\begin{equation*}
\int_{Q_{\sigma r}}(\psi\omega)^{2\theta}dg(y,s)ds\leq A_0\big(\frac{1}{(\sigma-1)^2r^2}\int_{Q_{\sigma r}(x,t)}\omega^2dg(y,s)ds\big)^\theta.
\end{equation*}
By the definition of $\psi$, we conclude 
\begin{equation}
\int_{Q_r}\omega^{2\theta}dg(y,s)ds\leq A_0\big(\frac{1}{(\sigma-1)^2r^2}\int_{Q_{\sigma r}(x,t)}\omega^2dg(y,s)ds\big)^\theta.
\label{BRF11}
\end{equation}
Now we choose the sequences of $\sigma_i$ and $p_i$ as
\[\sigma_0=2, \sigma_i=2-\sum_{j=1}^i2^{-j}, p_i=\theta^i,\]
then inequality (\ref{BRF11}) gives that
$$
\|u^2\|_{L^{\theta^{i+1}}(\sigma_{i+1}r)}\leq A_0^{(\frac{1}{\theta})^{i+1}}(\frac{\sigma^2_{i+1}}{(\sigma_i-\sigma_{i+1})^2r^2})^{(\frac{1}{\theta})
^i}\|u^2\|_{L^{\theta^{i}}(\sigma_{i}r)},
$$
which gives a $L^2$ mean value inequality
\begin{equation}
\sup_{Q_{r/2}(x,t)}u^2\leq\frac{c}{r^{2+n}}\int_{Q_r(x,t)}u^2dg(y,s)ds.\label{BRF12}
\end{equation}
Here constant $c$ depends on dimension $n$, $\lambda_0$  and initial metric $g(0)$.
From here, by a generic trick of Li and Schoen ( c.f. in \cite{L, LS}), we arrive at a $L^1$ mean value inequality: for $r>0$,
\begin{equation}
\sup_{Q_{r/2}(x,t)}u\leq\frac{c}{r^{2+n}}\int_{Q_r(x,t)}udg(y,s)ds.\label{BRF13}
\end{equation}

For  $y\in M$ and $s>t$,  applying (\ref{BRF13}) on $u=G(\cdot,\cdot : y,T)$ with $r=\sqrt{\frac{T-t}{2}}$ and using the fact 
$\int_Mu(z,\tau)dg(z,\tau)d\tau=1$,
we conclude
\begin{equation}
G(x,t;y,T)\leq\frac{c}{(T-t)^\frac{n}{2}}. \label{BRF14}
\end{equation}

{\em  Step 2.} Using methods of the exponential weight due to Davies in \cite{Da} to prove the full bound with the exponential term.\\
\text{\quad}It is clear that we only have to deal with the case $d(x_0,y_0,T)\geq 2\sqrt{T-t}$. Otherwise, by (\ref{BRF14}), the Gaussian type upper bound (\ref{BRF1}) holds obviously. Pick a point $x_0\in M$, a number $\lambda<0$ which is determined later and a function $f\in L^2(M,g(T))$. Consider the functions $u(x,t)$ and $F(x,t)$ defined by
\begin{eqnarray*}
u(x,t)&=:&\int_M G(x,t;y,T)e^{-\lambda d(y,x_0,T)}f(y)dg(y,T)\\
F(x,t)&=:&e^{\lambda d(x,x_0,t)}u(x,t).
\end{eqnarray*}
It is clear that $u$ is a solution of (\ref{O2})
with initial date
\[u(x,T)=e^{-\lambda d(x,x_0,T)}f(x).\]
Direct calculation shows
\begin{eqnarray*}
\partial_t\int F^2(x,t)dg(x,t)&=&\partial_t\int e^{2\lambda d(x.x_0,t)}u^2(x,t)dg(x,t)\\
&=&2\lambda\int e^{2\lambda d(x.x_0,t)}\partial_t d(x,x_0,t)u^2(x,t)dg(x,t)\\
&&-\int e^{2\lambda d(x.x_0,t)}u^2(x,t)S(x,t)dg(x,t)\\
&&-2\int e^{2\lambda d(x.x_0,t)}u(x,t)[\triangle u-Su]dg(x,t)\\
&\geq&-2\int e^{2\lambda d(x.x_0,t)}u(x,t)\triangle udg(x,t),
\end{eqnarray*}
where the last inequality has used the assumption that $Sy\geq0$ , $\lambda<0$, and $\partial_td(x,x_0,t)\leq0$.\\
Using integration by parts, we turn this inequality into
\begin{eqnarray*}
\partial_t\int F^2(x,t)dg(x,t)&\geq&4\lambda\int e^{2\lambda d(x.x_0,t)}u(x,t)\nabla u\nabla d(x,x_0,t)dg(x,t)\\
&&+2\int e^{2\lambda d(x.x_0,t)}|\nabla u|^2dg(x,t).
\end{eqnarray*}
Direct calculation gives
\begin{eqnarray*}
\int |\nabla F(x,t)|^2dg(x,t)&=&\int |\nabla(u(x,t)e^{\lambda d(x.x_0,t)})|^2dg(x,t)\\
&=&\int e^{2\lambda d(x.x_0,t)}|\nabla u|^2dg(x,t)\\
&&+2\lambda\int e^{2\lambda d(x.x_0,t)}u(x,t)\nabla u\nabla d(x,x_0,t)dg(x,t)\\
&&+\lambda^2\int e^{2\lambda d(x.x_0,t)}|\nabla d|^2u^2dg(x,t).
\end{eqnarray*}
Combining the last two expressions, we deduce
\begin{eqnarray*}
\partial_t\int F^2(x,t)dg(x,t)\geq2\int |\nabla F(x,t)|^2dg(x,t)-2\lambda^2\int e^{2\lambda d(x.x_0,t)}u^2dg(x,t),
\end{eqnarray*}
which shows
\[\partial_t\int F^2(x,t)dg(x,t)\geq -2\lambda^2\int F^2(x,t)dg(x,t).\]
Integrating on $[t,T]$, we arrive at the following $L^2$ estimate
\begin{eqnarray}
\int F^2(x,t)dg(x,t)\leq e^{2\lambda^2(T-t)}\int F^2(x,T)dg(x,T)
=e^{2\lambda^2(T-t)}\int f^2(x)dg(x,T).\nonumber\\
\label{BRF21}
\end{eqnarray}
Therefore, by the mean value inequality (\ref{BRF12}) with $r=\sqrt{\frac{T-t}{2}}$, the following holds
\begin{eqnarray*}
u^2(x,t)&\leq&\frac{c}{(T-t)^{1+\frac{n}{2}}}\int^{\frac{T+t}{2}}_{t}\int_{B(x,\sqrt{\frac{T-t}{2}},\tau)}u^2(z,\tau)dg(z,\tau)d\tau\\
&\leq&\frac{c}{(T-t)^{1+\frac{n}{2}}}\int^{\frac{T+t}{2}}_t\int_{B(x,\sqrt{\frac{T-t}{2}},\tau)}e^{-2\lambda d(z,x_0,\tau)}F^2(z,\tau)dg(z,\tau)d\tau.
\end{eqnarray*}
In particular, this holds at $x=x_0$. Therefore, by the assumption that $\lambda <0$, we get
\[u^2(x_0,t)\leq\frac{ce^{-2\lambda\sqrt{\frac{T-t}{2}}}}{(T-t)^{1+\frac{n}{2}}}\int^{\frac{T+t}{2}}_{t}\int_{B(x_0,\sqrt{\frac{T-t}{2}},\tau)}F^2(z,\tau)dg(z,\tau)d\tau.\]
From (\ref{BRF21}), it follows that
\[u^2(x_0,t)\leq\frac{ce^{2\lambda^2(T-t)-2\lambda\sqrt{\frac{T-t}{2}}}}{(T-t)^{\frac{n}{2}}}\int f^2(y)dg(y,T).\]
i.e.
\begin{eqnarray}
\left(\int G(x_0,t;z,T)e^{-\lambda d(z,x_0,T)}f(z)dg(z,T)\right)^2
\leq\frac{ce^{2\lambda^2(T-t)-2\lambda\sqrt{\frac{T-t}{2}}}}{(T-t)^{\frac{n}{2}}}\int f^2(y)dg(y,T).
\nonumber\\\label{BRF22}\end{eqnarray}
Now we fix $y_0$ such that $d(y_0,x_0,T)^2\geq 4(T-t)$. Then it is clear that, by $\lambda<0$ and the triangle inequality,
\[-\lambda d(z,x_0,T)\geq -\frac{\lambda}{2}d(x_0,y_0,T)\]
provided by  $d(z,y_0,T)\leq \sqrt{T-t}$. Then (\ref{BRF22}) implies
\begin{eqnarray*}
(\int_{B(y_0,\sqrt{T-t},T)} G(x_0,t;z,T)f(z)dg(T))^2
\leq\frac{ce^{\lambda d(x_0,y_0,T)+2\lambda^2(T-t)-2\lambda\sqrt{\frac{T-t}{2}}}}{(T-t)^{\frac{n}{2}}}\int f^2(y)dg(T).
\end{eqnarray*}
From the Cauchy-Schwarz inequality, it follows trivially that 
\begin{eqnarray*}
2\lambda^2(T-t)-2\lambda\sqrt{\frac{T-t}{2}}\leq3\lambda^2(T-t)+\frac{1}{2}.
\end{eqnarray*}
If we choose
$\lambda=-\frac{d(x_0,y_0,T)}{b(T-t)}$,
then we have
\begin{eqnarray*}
\big(\int_{B(y_0,\sqrt{T-t},T)} G(x_0,t;z,T)f(z)dg(z,T)\big)^2
\leq\frac{ce^{-\frac{c_1d^2(x_0,y_0,T)}{T-t}}}{(T-t)^{\frac{n}{2}}}\int f^2(y)dg(y,T)
\end{eqnarray*}
with $b>0$ sufficiently large, and $c_1$ is an absolute constant.
Then by arbitrariness of $f$, we derive 
\[\int_{B(y_0,\sqrt{T-t},T)} G^2(x_0,t;z,T)dg(z,T)\leq\frac{ce^{-c_1\frac{d^2(x_0,y_0,T)}{T-t}}}{(T-t)^{\frac{n}{2}}}.\]
Hence, there exists $z_0\in B(y_0,\sqrt{T-t},T)$ such that
\begin{equation}
G^2(x_0,t;z_0,T)\leq\frac{ce^{-c_1\frac{d^2(x_0,y_0,T)}{T-t}}}{(T-t)^{\frac{n}{2}}|B(y_0,\sqrt{T-t},T)|_T}.
\label{BRF24}
\end{equation}
Finally, let us remind that in \cite{CM} the adjoint property of the $G(x_0,t: \cdot,\cdot)$ is obtained, thus we have
\[\triangle_zG(x,t;z,\tau)-\partial_\tau G(x,t;z,\tau)=0\]
along the extend  Ricci flow (\ref{O1}).
Choosing $\delta=1$, it then follows from Theorem \ref{Thm5} that
\begin{eqnarray}
G(x_0,t;y_0,T)\leq \sqrt{G(x_0,t;z_0,T)}\sqrt{A}e^{\frac{d^2(y_0,z_0,T)}{4(T-t)}}
\leq e^{1/4}\sqrt{G(x_0,t;z_0,T)}\sqrt{A},\label{BRF25}
\end{eqnarray}
where $A=\sup_{M\times[\frac{t+T}{2},T]}G(x_0,t;\cdot,\cdot)$ and we used $z_0\in B(y_0,\sqrt{T-t},T)$. 
Since (\ref{BRF14}) implies $A\leq\frac{c}{(T-t)^{\frac{n}{2}}}$,
then (\ref{BRF24}) and (\ref{BRF25}) immediately yields
\[G(x_0,t;y_0,T)^2\leq\frac{c}{(T-t)^{\frac{n}{2}}}\frac{1}{(T-t)^{\frac{n}{4}}
\sqrt{|B(y_0,\sqrt{T-t},T)|_T}}e^{-\frac{c_1d^2(x_0,y_0,T)}{T-t}}.\]
Therefore, by Cauchy-Schwarz inequality, we get
\begin{eqnarray*}
G(x_0,t;y_0,T)&\leq& c(\frac{1}{(T-t)^{\frac{n}{2}}}+\frac{1}{|B(y_0,\sqrt{T-t},T)|_{T}})e^{-\frac{c_1d^2(x_0,y_0,T)}{T-t}}\\
&\leq&\frac{c}{|B(y_0,\sqrt{T-t},T)|_{T}}e^{-\frac{c_1d^2(x_0,y_0,T)}{T-t}}.
\end{eqnarray*}
Here $c$ depends on dimension $n$, $\lambda_0$  and initial metric $g(0)$ and $c_1$ depends only on dimension $n$.
In last inequality, we used volume comparison theorem with the non-negative Ricci curvature.
Since $x_0$ and $y_0$ are arbitrary, we complete the proof.
\end{proof}

\end{document}